\newtheorem{theorem}{Theorem}[section]
\newtheorem*{theorem*}{Theorem}
\newtheorem{lemma}{Lemma}[section]
\theoremstyle{definition}
\newtheorem{definition}[theorem]{Definition}
\newtheorem{remark}[theorem]{Remark}
\newcommand{\R}{\mathbb{R}}
\def \b {\beta}
\def\Ric{\text{Ric}}
\def\a{\alpha}
\def\l{\lambda}
\def\R{\mathbb{R}}
\def\Z{\mathbb{Z}}
\def\vp{\varphi}
\def\id{\operatorname{id}}
\def\Ric{\operatorname{Ric}}
\def\tr{\operatorname{tr}}
\numberwithin{equation}{section}
\newcommand*\owedge{\mathpalette\@owedge\relax}
\newcommand*\@owedge[1]{%
  \mathbin{%
    \ooalign{%
      $#1\m@th\bigcirc$\cr
      \hidewidth$#1\m@th\wedge$\hidewidth\cr
    }%
  }%
}
\begin{document}

\title[K\"ahler surfaces and curvature operator of the second kind]{K\"ahler surfaces with six-positive curvature operator of the second kind}

\author{Xiaolong Li}\thanks{The author's research is partially supported by Simons Collaboration Grant \#962228 and a start-up grant at Wichita State University}
\address{Department of Mathematics, Statistics and Physics, Wichita State University, Wichita, KS, 67260}
\email{xiaolong.li@wichita.edu}

\subjclass[2020]{53C21, 53C55}
\keywords{The curvature operator of the second kind, orthogonal bisectional curvature, K\"ahler surfaces}

\begin{abstract}
The purpose of this article is to initiate the investigation of the curvature operator of the second kind on K\"ahler manifolds. 
The main result asserts that a closed K\"ahler surface with six-positive curvature operator of the second kind is biholomorphic to $\mathbb{CP}^2$.
It is also shown that a closed non-flat K\"ahler surface with six-nonnegative curvature operator of the second kind is either biholomorphic to $\mathbb{CP}^2$ or isometric to $\mathbb{S}^2 \times \mathbb{S}^2$. 
\end{abstract}

\maketitle

\section{Introduction}
The Riemann curvature tensor $R_{ijkl}$ of a Riemannian manifold $(M^n,g)$ defines two kinds of curvature operators:  $\hat{R}$ acting on two-forms via 
\begin{equation*}
    \hat{R}(e_i\wedge e_j) =\frac 1 2 \sum_{k,l}R_{ijkl}e_k \wedge e_l,
\end{equation*}
and $\mathring{R}$ acting on symmetric two-tensors
via 
\begin{equation*}
    \mathring{R}(e_i \odot e_j) =\sum_{k,l}R_{iklj} e_k \odot e_l,
\end{equation*}
where $\{e_1, \cdots, e_n\}$ is an orthonormal basis of the tangent space $T_pM$ at $p \in M$, $\wedge$ denotes the wedge product, and $\odot$ denotes the symmetric product. 
The self-adjoint operator $\hat{R}$ is the so-called curvature operator (of the first kind by Nishikawa \cite{Nishikawa86}) and it is well-studied in Riemannian geometry (see \cite{GM75}, \cite{Tachibana74}, \cite{Hamilton86}, \cite{BW08}, \cite{PW21}, etc.). The curvature operator of the second kind refers to the restriction of $\mathring{R}$ to the space of traceless symmetric two-tensors $S^2_0(T_pM)$ (see Section 2 for more details). 
The operator $\mathring{R}$ and its restriction to $S^2_0(T_pM)$  were investigated in some early works (see \cite{CV60}, \cite{BE69}, \cite{BK78}, \cite{OT79}, \cite{Nishikawa86}, \cite{Kashiwada93} etc.), and some recently works resolving Nishikawa's conjecture (see \cite{CGT21} and \cite{Li21, Li22JGA}). 

To motivate the context of this paper, let's begin with an overview on Nishikawa's conjecture and its resolution. 
A central theme in geometry is that positivity conditions on curvature often have strong restrictions on the topology of the underlying manifold.
For instance, a fundamental result in Riemannian geometry asserts that the $n$-sphere $\mathbb{S}^n$, up to diffeomorphism, is the only simply-connected closed manifold that admits a metric with two-positive curvature operator. This was proved via Ricci flow by Hamilton \cite{Hamilton82} for $n=3$, Hamilton \cite{Hamilton86} and Chen \cite{Chen91} for $n=4$, and B\"ohm and Wilking \cite{BW08} for $n\geq 5$.
Regarding the curvature operator of the second kind, Nishikawa \cite{Nishikawa86} conjectured in 1986 that a closed Riemannian manifold with positive curvature operator of the second kind is diffeomorphic to a spherical space form and a closed Riemannian manifold with nonnegative curvature operator of the second kind is diffeomorphic to a Riemannian locally symmetric space. The positive and nonnegative case of Nishikawa's conjecture were recently settled under weaker assumptions by Cao, Gursky and Tran \cite{CGT21} and the author \cite{Li21}, respectively. More precisely, it has been shown that
\begin{theorem}\label{thm Nishikawa}
Let $(M^n,g)$ be a closed Riemannian manifold of dimension $n\geq 3$. 
\begin{enumerate}
    \item If $M$ have three-positive curvature operator of the second kind, then $M$ is diffeomorphic to a spherical space form. 
    \item If $M$ have three-nonnegative curvature operator of the second kind, then $M$ is either flat, or diffeomorphic to a spherical space form, or $n\geq 5$ and $M$ is isometric to a compact irreducible symmetric space\footnote{After the submission of this paper, the third possibility in part (2) has been ruled out by Nienhaus, Petersen and Wink \cite{NPW22}. They proved that a closed $n$-dimensional Riemannian manifold with $\frac{n+2}{2}$-nonnegative curvature operator of the second kind is either a rational homology sphere or flat.}. 
\end{enumerate}
\end{theorem}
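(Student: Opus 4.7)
The plan is to reduce Theorem~\ref{thm Nishikawa} to known convergence results for the Ricci flow, by establishing the right algebraic pinching implication and then invoking Brendle's theorem on positive isotropic curvature.

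The key first step is to show that three-positive curvature operator of the second kind is strong enough to imply positive isotropic curvature on $M\times\mathbb{R}^2$ (the PIC2 condition of Brendle). To do this I would test $\mathring R$ on carefully chosen traceless symmetric two-tensors built from a PIC2 frame $\{e_1,e_2,e_3,e_4\}$ together with the two flat directions. Candidate test tensors are things like $e_1\odot e_3 - e_2\odot e_4$, $e_1\odot e_4 + e_2\odot e_3$, and a third tensor involving the flat directions, chosen so that (i) each is traceless and of unit norm, and (ii) the sum $\langle\mathring R,T_1\otimes T_1\rangle+\langle\mathring R,T_2\otimes T_2\rangle+\langle\mathring R,T_3\otimes T_3\rangle$ reproduces exactly the PIC2 curvature expression $R_{1313}+R_{1414}+R_{2323}+R_{2424}-2R_{1234}$. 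Taking the sum of three eigenvalues of $\mathring R|_{S_0^2}$ as a lower bound (the three-positivity hypothesis) would then force this PIC2 expression to be positive.

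With this algebraic implication in hand, part (1) follows directly: closed manifolds with PIC2 evolve under the normalized Ricci flow to a metric of constant positive sectional curvature by Brendle's convergence theorem, and such manifolds are diffeomorphic to spherical space forms. (One must also check, either by the same algebraic argument applied to evolving curvature or by noting that Brendle's invariant cone sits inside the three-nonnegative cone of $\mathring R$, that the hypothesis is preserved along the flow; actually the cleaner route is to show the three-positivity of $\mathring R$ itself is preserved by the Ricci flow ODE, so one can bypass re-checking PIC2 at later times.)

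For part (2), I would run the Ricci flow starting from the three-nonnegative metric and apply Hamilton's strong maximum principle. Either the condition becomes strictly three-positive for $t>0$, in which case part (1) applies and $M$ is a spherical space form, or the kernel of $\mathring R|_{S_0^2}$ contains a nonzero parallel section for all time. Analyzing the invariance of this kernel under the Hamilton ODE and combining it with the de Rham splitting theorem and Berger's holonomy classification should force $M$ (or its universal cover) to be flat, a round sphere quotient, or an irreducible symmetric space; a case-by-case check on the symmetric space list using the algebraic hypothesis rules out all but the compact irreducible type. The main obstacle I expect is the first step: finding the precise three test tensors and controlling the trace-free projection so that the eigenvalue sum matches the PIC2 expression on the nose, with no leftover error terms that the three-positivity hypothesis cannot absorb.
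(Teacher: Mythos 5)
This statement is a citation, not a result proved in the paper: it is Theorem 1.1, quoted from Cao--Gursky--Tran \cite{CGT21} (part (1) under two-positivity) and the author's \cite{Li21} (the improvement to three-positivity, and part (2)). The paper's exposition of the proof route is precisely: three-positive curvature operator of the second kind $\Rightarrow$ strictly PIC1 (in the sense of Brendle), then Brendle's 2008 convergence theorem for the normalized Ricci flow under strictly PIC1; for part (2), reduce to the locally irreducible case and invoke the classification (Brendle's book) of closed locally irreducible manifolds with weakly PIC1.

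Your proposal has the right flavor (test $\mathring R$ on cleverly chosen traceless symmetric two-tensors, land in a Ricci-flow-invariant curvature cone, invoke Brendle) but there are two genuine gaps. First, you target the wrong cone. You say you will establish PIC2, but the display you write down, $R_{1313}+R_{1414}+R_{2323}+R_{2424}-2R_{1234}>0$, is the $\lambda=\mu=1$ endpoint, i.e.\ plain PIC, which is the \emph{weakest} of the three conditions; PIC alone does not feed into Brendle's convergence theorem and only gives a Micallef--Moore-type connected-sum conclusion. The condition you actually need, and the one the literature proves, is strictly PIC1 --- positivity of $R_{1313}+\lambda^2 R_{1414}+R_{2323}+\lambda^2 R_{2424}-2\lambda R_{1234}$ for \emph{all} $\lambda\in[0,1]$ --- and producing that from three test tensors requires $\lambda$-dependent tensors, not the three fixed ones you sketch. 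Your ``cleaner route'' of proving that three-positivity of $\mathring R$ is itself preserved by Hamilton's ODE is not known and is in fact one of the central open difficulties in this subject; the entire point of passing to PIC1 is to land in a cone whose invariance Brendle has already established. Second, your plan for part (2) --- strong maximum principle on the kernel of $\mathring R|_{S^2_0}$ plus de Rham/Berger --- is not what \cite{Li21} does and would again require the unproved cone-invariance; the actual argument reduces to the locally irreducible case and quotes the structure theorem for closed locally irreducible manifolds with weakly PIC1.
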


Part (1) of Theorem \ref{thm Nishikawa} was proved by Cao, Gursky and Tran \cite{CGT21} under two-positivity of the curvature operator of the second kind. Their key observation is that two-positive curvature operator of the second kind implies the strictly PIC1 condition introduced by Brendle \cite{Brendle08}. This is sufficient, as Brendle \cite{Brendle08} has shown that the normalized Ricci flow evolves an initial metric with strictly PIC1 into a limit metric with constant positive sectional curvature.
Soon after, the author \cite{Li21} weakened the assumption to three-positivity by showing that strictly PIC1 is implied by three-positivity of the curvature operator of the second kind. Furthermore, the author proved part (2) of Theorem \ref{thm Nishikawa} by reducing the problem to the locally irreducible case and using the classification results (see \cite{Brendle10book}) for closed locally irreducible Riemannian manifolds with weakly PIC1.

Another important result obtained by Cao, Gursky and Tran in \cite{CGT21} states that when $n\geq 4$, $\mathbb{S}^n$ is, up to homeomorphism, the only simply-connected closed manifold which admits a metric with four-positive curvature operator of the second kind. 
This follows from their important observation that four-positive curvature operator of the second kind implies positive isotropic curvature, and the excellent work of Micallef and Moore \cite{MM88} on closed simply-connected manifolds with positive isotropic curvature. An improvement of this result, together with a corresponding rigidity theorem, has been obtained by the author in \cite{Li22JGA}.
\begin{theorem}\label{thm 4.5PCO}
Let $(M^n,g)$ be a closed Riemannian manifold of dimension $n\geq 4$. If $M$ has $4\frac1 2$-positive curvature operator of the second kind, then $M$ is homeomorphic (diffeomorphic if either $n=4$ or $n\geq 12$) to a spherical space form. If $M$ has $4\frac 1 2$-nonnegative curvature operator of the second kind, then one of the following statements holds:
\begin{enumerate}
    \item $M$ is flat;
    \item $M$ is homeomorphic (diffeomorphic if either $n=4$ or $n\geq 12$) to a spherical space form;
    \item $n=4$ and $M$ is isometric to $\mathbb{CP}^2$ with Fubini-Study metric, up to scaling\footnote{The isometry is obtained in Theorem \ref{thm Kahler 2D} and also in \cite{Li22Kahler}.};
    \item $n=4$ and the universal cover of $M$ is isometric to $\mathbb{S}^3 \times \R$, up to scaling\footnote{The isometry is obtained in \cite{Li22product}.}.
\end{enumerate}
\end{theorem}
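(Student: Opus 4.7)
My plan is to reduce the $4\tfrac12$-positivity (respectively nonnegativity) hypothesis to the classical condition of (strict, respectively weak) positive isotropic curvature, and then invoke the now well-developed classification and Ricci flow convergence results for PIC manifolds. The heart of the argument should be the pointwise linear-algebra statement that $4\tfrac12$-positivity of $\mathring R|_{S^2_0(T_pM)}$ forces, in every orthonormal 4-frame $\{e_1,e_2,e_3,e_4\}$,
\[
\mathcal I := R_{1313}+R_{1414}+R_{2323}+R_{2424}-2R_{1234}>0.
\]
To establish this I would exhibit five pairwise-orthogonal, unit-norm, traceless symmetric 2-tensors $\phi_1,\dots,\phi_5\in S^2_0(T_pM)$ supported on $\spann\{e_1,\dots,e_4\}$ whose contributions combine to a positive multiple of $\mathcal I$. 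Natural candidates are the off-diagonal tensors $\tfrac{1}{\sqrt 2}(e_1\odot e_3-e_2\odot e_4)$ and $\tfrac{1}{\sqrt 2}(e_1\odot e_4+e_2\odot e_3)$, which by the first Bianchi identity produce the $-2R_{1234}$ cross term, together with suitable diagonal tensors built from $e_i\odot e_i$ that account for the four sectional terms. Weighting the fifth tensor with factor $\tfrac12$ is exactly what matches the ``$\tfrac12$'' in the hypothesis, and the min-max characterization of the ordered eigenvalues $\lambda_1\le\lambda_2\le\cdots$ of $\mathring R|_{S^2_0}$ then yields $\mathcal I>0$. The nonnegative version follows by taking limits.

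With strict PIC in hand I would invoke the Micallef--Moore theorem to conclude that the universal cover is homeomorphic to $\mathbb{S}^n$; finiteness of $\pi_1(M)$ together with the topological spherical space form theorem upgrades this to homeomorphism to a spherical space form on $M$ itself. For $n=4$ Hamilton's classification of closed $4$-manifolds with positive isotropic curvature (as refined by Chen--Tang--Zhu) gives diffeomorphism; for $n\ge 12$ Brendle's differentiable sphere theorem under PIC supplies the same conclusion, and the intermediate dimensions $5\le n\le 11$ remain topological only because Brendle's argument is not known there. For the nonnegative case, weakly PIC combined with the rigidity theorems of Brendle--Schoen, Brendle, and Seshadri reduces matters to identifying which locally symmetric or reducible models are compatible with $4\tfrac12$-nonnegative $\mathring R$; a direct eigenvalue computation on the candidate list should rule out everything except the flat case, spherical space forms, $\mathbb{CP}^2$ with the Fubini--Study metric, and quotients of $\mathbb{S}^3\times\R$, matching the four alternatives in the statement.

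The main obstacle is the tight five-tensor identity underlying the algebraic reduction. Because $4\tfrac12$ is the sharp threshold, the tensors must be chosen so that all unwanted cross terms cancel identically and no extraneous curvature component appears in the sum; finding the correct choice (and ensuring that it really matches the eigenvalue count with the $\tfrac12$ weight) is the essential new input. The equality case of this identity is also what should single out the Fubini--Study $\mathbb{CP}^2$ and the $\mathbb{S}^3\times\R$ quotients in the rigidity conclusion, so Step 1 must be set up with enough precision that its equality case can be analyzed geometrically to produce exactly those two exceptional models; once that is done, the subsequent topological and rigidity inputs are by now essentially black boxes.
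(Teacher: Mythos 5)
The paper does not actually prove this theorem; it quotes it from \cite{Li22JGA} and only records the proof strategy there, namely to apply $\mathring R$ to eigenvectors of the curvature operator of the second kind on the model spaces $\mathbb{CP}^2$ and $\mathbb{S}^3 \times \mathbb{S}^1$, in the same spirit as the $\mathbb{S}^2\times\mathbb{S}^2$ computation carried out in Section~3 for Theorem~\ref{thm 6PCO Riem}. Your proposal is consistent with that strategy at a high level: reduce $4\tfrac12$-positivity to strict PIC by a pointwise algebraic identity, then invoke Micallef--Moore, the $4$-dimensional PIC classification, and Brendle's work in dimensions $\ge 12$. You also correctly identify that the sharp algebraic identity (and its equality case) is where the real work lies, and that it must single out $\mathbb{CP}^2$ and $\mathbb{S}^3\times\R$.

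There is, however, a genuine gap in the topological part of your outline. You write that strict PIC plus Micallef--Moore gives that the universal cover is homeomorphic to $\mathbb{S}^n$, and that ``finiteness of $\pi_1(M)$'' then upgrades this to a spherical space form. But strict PIC alone does \emph{not} force $\pi_1$ to be finite: the product $\mathbb{S}^{n-1}\times\mathbb{S}^1$ with the standard metric is compact, has \emph{strictly} positive isotropic curvature (indeed $W\equiv 0$ and $S>0$), and has $\pi_1\cong\Z$. Its universal cover $\mathbb{S}^{n-1}\times\R$ is noncompact, so Micallef--Moore is not applicable to it. What rules this example out is not PIC but the curvature-operator hypothesis itself: a direct computation gives that $\mathring R$ on $\mathbb{S}^{n-1}\times\R$ has sum of smallest $4\tfrac12$ eigenvalues equal to zero, so it is $4\tfrac12$-nonnegative but \emph{not} $4\tfrac12$-positive. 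Your argument must therefore extract a second consequence from $4\tfrac12$-positivity (for instance, strictly positive Ricci curvature, or a further averaged algebraic inequality of the same type) to guarantee $\pi_1(M)$ is finite before Micallef--Moore can be applied to the universal cover. Producing only the single PIC quantity $\mathcal I>0$ is not enough; this is exactly why the cited proof uses \emph{two} model spaces, $\mathbb{CP}^2$ (borderline for the PIC-type inequality) and $\mathbb{S}^3\times\mathbb{S}^1$ (borderline for the $\pi_1$/Ricci-type inequality), and correspondingly two families of test tensors.

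A smaller technical caveat: the single family $\phi_1,\dots,\phi_5$ with the last weighted by $\tfrac12$ will generally not combine into a \emph{pure} multiple of $\mathcal I$ --- the sum of $4\tfrac12$ diagonal entries is one specific quadratic form in $R$, and matching it exactly to $\mathcal I$ over-determines the $\phi_i$. In the averaging proofs in this circle of ideas (see the proof of Theorem~\ref{thm 6PCO Riem} in Section~3, where four different six-term sums are averaged) one typically combines several choices of orthonormal $4\tfrac12$-tuples and averages so that unwanted curvature components cancel. You should expect a similar averaging step here rather than a single clean five-tensor identity.
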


In the above theorem, $(M^n,g)$ is said to have $\alpha$-positive curvature operator of the second kind for $\a \in [1,\frac{(n-1)(n+2)}{2}]$ if for any $p\in M$ and any orthonormal basis $\{\vp_i\}_{i=1}^{\frac{(n-1)(n+2)}{2}}$ of $S^2_0(T_pM)$, it holds that
\begin{equation}\label{alpha positive def}
     \sum_{i=1}^{\lfloor \a \rfloor} \mathring{R}(\vp_i,\vp_i) +(\a -\lfloor \a \rfloor) \mathring{R}(\vp_{\lfloor \a \rfloor+1},\vp_{\lfloor \a \rfloor+1}) > 0. 
\end{equation}
Here and throughout the paper, $\lfloor \a \rfloor $ denotes the floor function defined by 
$$\lfloor \a \rfloor =\max \{ m \in \Z: m \leq \a \}.$$
When $\a=k$ is a positive integer, this reduces to the usual definition, which means the sum of the smallest $k$ eigenvalues of the curvature operator of the second kind is positive. Of course $\alpha$-nonnegativity means equality is allowed in \eqref{alpha positive def}.

The purpose of this paper is to initiate the study of the curvature operator of the second kind on K\"ahler manifolds. A natural question is to seek for a positivity condition on the curvature operator of the second kind that characterizes the complex projective space $\mathbb{CP}^m$. We are able to answer this question in complex dimension two. 

\begin{theorem}\label{thm main}
Let $(M^4,g)$ be a closed K\"ahler surface with six-positive curvature operator of the second kind. Then $M$ is biholomorphic to $\mathbb{CP}^2$. 
\end{theorem}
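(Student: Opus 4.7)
The plan is to turn the six-positivity hypothesis into positivity of the orthogonal bisectional curvature via an explicit algebraic computation on $S^2_0(T_pM)$, and then invoke a classical classification theorem. Since the curvature tensor of a K\"ahler manifold is $J$-invariant, $\mathring{R}$ commutes with the induced $J$-action on $S^2(T_pM)$ at any point $p \in M$ and preserves the orthogonal splitting $S^2_0(T_pM) = S^2_{+,0}(T_pM) \oplus S^2_-(T_pM)$ into traceless $J$-invariant tensors (real dimension $3$) and $J$-anti-invariant tensors (real dimension $6$).

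Choose unit vectors $X, Y \in T_pM$ with $X \perp Y$ and $X \perp JY$, set $(e_1, e_2, e_3, e_4) = (X, JX, Y, JY)$, and write $B = R(X, JX, Y, JY)$, $H_X = R(X, JX, X, JX)$, $H_Y = R(Y, JY, Y, JY)$. Using the K\"ahler identity $R(JU, JV, W, Z) = R(U, V, W, Z)$ and the first Bianchi identity, I would verify that the seven unit-norm tensors $\psi_1 = e_1 \odot e_3 - e_2 \odot e_4$, $\psi_2 = e_1 \odot e_4 + e_2 \odot e_3$, $\psi_3 = \sqrt{2}\, e_1 \odot e_2$, $\psi_4 = \sqrt{2}\, e_3 \odot e_4$ (lying in $S^2_-$), together with $\phi_1 = \tfrac{1}{2}(e_1 \otimes e_1 + e_2 \otimes e_2 - e_3 \otimes e_3 - e_4 \otimes e_4)$, $\phi_2 = e_1 \odot e_3 + e_2 \odot e_4$, and $\phi_3 = e_1 \odot e_4 - e_2 \odot e_3$ (lying in $S^2_{+,0}$), are pairwise orthogonal in $S^2_0(T_pM)$, and that $\mathring{R}(\psi_1,\psi_1) = \mathring{R}(\psi_2,\psi_2) = 2B$, $\mathring{R}(\psi_3,\psi_3) = H_X$, $\mathring{R}(\psi_4,\psi_4) = H_Y$, $\mathring{R}(\phi_1,\phi_1) = B - \tfrac{1}{2}(H_X + H_Y)$, and $\mathring{R}(\phi_2,\phi_2) + \mathring{R}(\phi_3,\phi_3) = -2B$.

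Summing the diagonal values over the orthonormal six-tuple $\{\psi_1, \psi_2, \phi_1, \phi_2, \phi_3, \psi_3\}$ and invoking six-positivity gives $3B + \tfrac{1}{2}(H_X - H_Y) > 0$; the analogous argument with $\psi_4$ in place of $\psi_3$ gives $3B + \tfrac{1}{2}(H_Y - H_X) > 0$. Adding the two inequalities yields $B > 0$, and since $(X,Y)$ was arbitrary, $M$ has positive orthogonal bisectional curvature. The theorem then follows from the classical result, due to Chen and to Gu--Zhang, that a closed K\"ahler surface with positive orthogonal bisectional curvature is biholomorphic to $\mathbb{CP}^2$.

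The main obstacle I anticipate is locating the six-tuple in the middle step: one needs an orthonormal combination whose $\mathring{R}$-diagonal sum isolates $3B$ up to a term antisymmetric under $X \leftrightarrow Y$ that can be killed by symmetrization; the individual diagonal computations, while mechanical, rely on several careful applications of the K\"ahler curvature symmetries.
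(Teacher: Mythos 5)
Your proof is correct in substance and follows the same strategy as the paper: reduce six\mbox{-}positivity of $\mathring R$ to positivity of the orthogonal bisectional curvature, then invoke the Chen/Gu--Zhang classification. The difference is only in packaging. The paper first proves a purely Riemannian lemma (its Theorem \ref{thm 6PCO Riem}) asserting that six-positivity of $\mathring R$ forces $R_{1313}+R_{1414}+R_{2323}+R_{2424}>0$ for \emph{every} orthonormal four-frame, by testing $\mathring R$ against the explicit orthonormal nine-set $\{\vp_1,\dots,\vp_9\}\subset S^2_0(T_pM)$ and averaging the four six-term sums; only afterwards does it specialize to the $J$-adapted frame $\{X,JX,Y,JY\}$ and use the first Bianchi identity together with $J$-invariance to identify the resulting quantity with $2R(X,JX,Y,JY)$. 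You instead work with $J$-adapted tensors from the outset, organized by the invariant/anti-invariant splitting of $S^2_0$. These routes are equivalent: with the correct normalizations your six-tuples $\{\psi_1,\psi_2,\phi_1,\phi_2,\phi_3,\psi_3\}$ and $\{\psi_1,\psi_2,\phi_1,\phi_2,\phi_3,\psi_4\}$ span exactly the same six-dimensional subspaces as the paper's $\{\vp_1,\dots,\vp_5,\vp_6\}$ and $\{\vp_1,\dots,\vp_5,\vp_7\}$, so the traces coincide and your two inequalities $3B\pm\tfrac12(H_X-H_Y)>0$ are literally the paper's specialized to the K\"ahler case; your symmetrization in $X\leftrightarrow Y$ plays the role of the paper's use of $\vp_8,\vp_9$, which are redundant here anyway since $\mathring R(\vp_8,\vp_8)=\mathring R(\vp_6,\vp_6)=R_{1212}$ and $\mathring R(\vp_9,\vp_9)=\mathring R(\vp_7,\vp_7)=R_{3434}$. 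One small slip worth fixing: with the paper's convention $|e_i\odot e_j|^2=2$ for $i\ne j$, several of the tensors you call ``unit-norm'' are not --- $\psi_1,\psi_2,\phi_2,\phi_3$ as written have norm $2$, and $\psi_3,\psi_4$ should carry coefficient $1/\sqrt2$ rather than $\sqrt2$ --- but the diagonal values you list are those of the correctly normalized tensors, so the argument goes through unchanged.
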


The curvature assumption in Theorem \ref{thm main} is optimal, as the product K\"ahler manifold $\mathbb{S}^2 \times \mathbb{S}^2$, with the same round metric on both factors, has $(6+\epsilon)$-positive curvature operator of the second kind for any $\epsilon >0$ (see \cite{BK78} or \cite{CGT21}).  

We also prove a rigidity result. 
\begin{theorem}\label{thm 6NCO}
Let $(M^4,g)$ be a closed K\"ahler surface with six-nonnegative curvature operator of the second kind. Then one of the following statements holds: 
\begin{enumerate}
    \item $M$ is flat;
    \item $M$ is biholomorphic to $\mathbb{CP}^2$;
    \item the universal cover of $M$ is isometric to $\mathbb{S}^2\times \mathbb{S}^2$, with both factors equipped with the same round metric. 
\end{enumerate}
\end{theorem}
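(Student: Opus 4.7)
The plan is to deduce Theorem \ref{thm 6NCO} from Theorem \ref{thm main} by running the K\"ahler-Ricci flow and invoking a strong-maximum-principle dichotomy. If the starting metric $g$ has strictly six-positive curvature operator of the second kind at every point, Theorem \ref{thm main} immediately yields alternative (2). Otherwise, the infimum over $p\in M$ of the sum of the six smallest eigenvalues of $\mathring{R}|_{S^2_0(T_pM)}$ is zero, and the goal reduces to showing that $M$ is either flat or covered by $\mathbb{S}^2\times\mathbb{S}^2$ with equal round metrics on both factors.

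First I would establish that six-nonnegativity of $\mathring{R}|_{S^2_0}$ is preserved along the K\"ahler-Ricci flow on a closed K\"ahler surface. Granting that ODE-invariance of the relevant cone of algebraic K\"ahler curvature tensors can be extracted from the analysis behind Theorem \ref{thm main}, Hamilton's PDE maximum principle on the bundle $S^2(S^2_0(TM))$ transports this to invariance under the flow, so that the evolving metric $g(t)$ remains six-nonnegative for all $t$ in its maximal existence interval.

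Next comes the strong-maximum-principle step. Either for some $t_0>0$ the metric $g(t_0)$ has strictly six-positive curvature operator of the second kind, in which case Theorem \ref{thm main} applied to $(M,g(t_0))$ gives $M\cong \mathbb{CP}^2$; or the minimum of the six-eigenvalue sum remains zero along the flow. In the latter case, a Hamilton-type strong maximum principle applied to the subbundle of $S^2_0(TM)$ spanned by the bottom six eigenspaces of $\mathring{R}$ should produce a nonzero parallel subbundle $V\subset S^2_0(TM)$ on which $\mathring{R}$ vanishes identically. Identifying $S^2_0(TM)$ with traceless self-adjoint endomorphisms of $TM$, the parallelism of $V$ together with the K\"ahler condition would force the restricted holonomy of $g$ to lie in a proper closed subgroup of $\mathsf{U}(2)$. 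The de Rham decomposition of the universal cover $\widetilde{M}$ then produces an isometric splitting $\widetilde{M}\cong (\Sigma_1,h_1)\times(\Sigma_2,h_2)$ into a product of Kähler curves.

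To finish, I would carry out the eigenvalue computation for $\mathring{R}|_{S^2_0}$ on such a product, an algebraic calculation analogous to those in \cite{BK78} and \cite{CGT21}, and verify that the six-sum of eigenvalues vanishes pointwise only when the two factors have the same constant Gaussian curvature $k$. If $k=0$ then $M$ is flat (alternative (1)); otherwise $k>0$ and both factors are $\mathbb{S}^2$ of the same radius, giving alternative (3). The main obstacle I anticipate is the strong-maximum-principle step, because the degeneracy "$\sum_{i=1}^6\lambda_i=0$" is a partial-trace condition rather than a single-eigenvalue condition, so Hamilton's argument does not apply verbatim. Overcoming it will require either showing that in the boundary case six individual eigenvalues must simultaneously vanish (reducing to the classical setting), or formulating a strong maximum principle for partial-trace functionals directly on $S^2(S^2_0(TM))$ along the K\"ahler-Ricci flow.
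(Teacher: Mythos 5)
Your route is genuinely different from the paper's, but it contains a serious gap that your own last paragraph flags without resolving. You ask the reader to grant that six-nonnegativity of $\mathring{R}|_{S^2_0}$ is preserved along the K\"ahler--Ricci flow, claiming this ``can be extracted from the analysis behind Theorem \ref{thm main}.'' It cannot. Theorem \ref{thm main} is proved by a purely pointwise algebraic argument (Theorem \ref{thm 6PCO kahler}) showing that six-positivity of $\mathring{R}$ implies positive orthogonal bisectional curvature, followed by an appeal to the Chen/Gu--Zhang/Wilking classification of closed K\"ahler manifolds with positive OBC. The Ricci-flow invariance that enters those cited works is invariance of the OBC cone, not of any cone defined by $\mathring{R}$. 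Nothing in the paper, nor in the literature it cites, establishes that $k$-nonnegativity of the curvature operator of the second kind is an ODE- or PDE-invariant condition; in fact, whether any such condition is Ricci-flow invariant is a well-known open problem (and the trace issue you mention --- that $\mathring{R}$ does not preserve $S^2_0$ unless the metric is Einstein --- makes even formulating the right evolution equation delicate). Without this, the whole flow-plus-strong-maximum-principle architecture collapses, and your second acknowledged difficulty (a strong maximum principle for a partial-trace functional rather than a single eigenvalue) compounds the problem.

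The paper sidesteps all of this: six-nonnegativity implies nonnegative OBC by the same pointwise computation used for the strict case, and then one invokes Gu--Zhang's Theorem 1.3 directly, which already classifies closed K\"ahler manifolds with nonnegative OBC into flat, $\mathbb{CP}^m$, or a de Rham product. For a K\"ahler surface the product factors are Riemann surfaces, and Lemma \ref{lemma 2 by 2} is a short algebraic argument (testing $\mathring{R}$ against the nine explicit tensors $\vp_1,\dots,\vp_9$ adapted to the splitting) forcing $R_{1212}=R_{3434}\ge 0$, hence equal constant curvature on both factors. This is exactly the conclusion you hope to reach at the end of your holonomy-reduction step, but the paper gets there without ever touching the flow. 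If you want to pursue your plan, the two things you must actually supply are (i) a proof that the cone of algebraic K\"ahler curvature tensors with six-nonnegative $\mathring{R}$ is invariant under Hamilton's ODE, and (ii) a strong maximum principle adapted to the sum of the six smallest eigenvalues; neither is currently available.
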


In K\"ahler geometry, the complex projective space $\mathbb{CP}^m$ is characterized by several positivity conditions on curvature. 
For instance, $\mathbb{CP}^m$ is, up to biholomorphism, the only closed manifold that admits a K\"ahler metric with positive bisectional curvature.  
This was known as the Frankel conjecture \cite{Frankel61} and it was proved independently by Mori \cite{Mori79} and Siu and Yau \cite{SY80}. 
A weaker condition, called positive orthogonal bisectional curvature, also characterizes $\mathbb{CP}^m$. This is due to Chen \cite{Chen07} and Gu and Zhang \cite{GZ10} (see also Wilking \cite{Wilking13} for an alternative proof using Ricci flow).
Therefore, Theorem \ref{thm main} is a direct consequence of the following theorem.  
\begin{theorem}\label{thm 6PCO kahler}
Let $(M^m,g)$ be a K\"ahler manifold of complex dimension $m\geq 2$. If $M$ has six-positive (respectively, six-nonnegative) curvature operator of the second kind, then $M$ has positive (respectively, nonnegative) orthogonal bisectional curvature. 
\end{theorem}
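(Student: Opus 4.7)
The plan is to prove the statement pointwise. Fix $p\in M$ and orthonormal $X, Y\in T_pM$ with $Y$ orthogonal to $\operatorname{span}\{X, JX\}$, so that $\{X, JX, Y, JY\}$ is a $J$-invariant orthonormal $4$-frame spanning a real subspace $V\subset T_pM$. Set $B:=R(X, JX, Y, JY)$ and $H(X):=R(X, JX, X, JX)$. The strategy is to exhibit four orthonormal six-tuples in $S^2_0(T_pM)$ whose $\mathring{R}$-sums aggregate exactly to $12B$, so that six-positivity forces $B>0$. To this end I would first construct nine mutually orthogonal unit vectors in $S^2_0(T_pM)$ supported on $V$: the four ``mixed'' tensors $\chi_1 = X\odot Y + JX\odot JY$, $\chi_2 = X\odot Y - JX\odot JY$, $\chi_3 = X\odot JY + JX\odot Y$, $\chi_4 = X\odot JY - JX\odot Y$; the two K\"ahler-type off-diagonals $\varphi_{12} = \sqrt{2}\,X\odot JX$, $\varphi_{34} = \sqrt{2}\,Y\odot JY$; and three traceless diagonals $\psi_A = \tfrac{1}{\sqrt{2}}(X\odot X - JX\odot JX)$, $\psi_B = \tfrac{1}{\sqrt{2}}(Y\odot Y - JY\odot JY)$, $\psi_C = \tfrac{1}{2}(X\odot X + JX\odot JX - Y\odot Y - JY\odot JY)$.

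Next I would compute each $\mathring{R}(\varphi, \varphi)$ using the first Bianchi identity applied to the $4$-frame $(X, JX, Y, JY)$, the K\"ahler identities $R(JA, JB, C, D)=R(A, B, C, D)$, $K(JX, JY)=K(X, Y)$, $K(JX, Y)=K(X, JY)$, and Berger's relation $K(X, Y)+K(X, JY)=B$. Mixed components such as $R(X, JY, Y, JX)=B-K(X, Y)$ reduce everything to $B$, $K(X, Y)$, $H(X)$, and $H(Y)$; after cancellations one obtains
\[\sum_{i=1}^{4} \mathring{R}(\chi_i, \chi_i) = 2B,\qquad \mathring{R}(\psi_C, \psi_C) = B - \tfrac{1}{2}\bigl(H(X) + H(Y)\bigr),\]
together with $\mathring{R}(\varphi_{12}, \varphi_{12}) = \mathring{R}(\psi_A, \psi_A) = H(X)$ and $\mathring{R}(\varphi_{34}, \varphi_{34}) = \mathring{R}(\psi_B, \psi_B) = H(Y)$.

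Then I would form four orthonormal six-tuples of the form $\{\chi_1, \chi_2, \chi_3, \chi_4, \psi_C, \omega\}$ with $\omega$ running over $\varphi_{12}, \varphi_{34}, \psi_A, \psi_B$. Each six-tuple sum equals $3B - \tfrac{1}{2}(H(X)+H(Y)) + \mathring{R}(\omega, \omega)$, and summing over the four choices of $\omega$ the holomorphic contributions cancel, yielding aggregate $12B$. Six-positivity of $\mathring{R}$ ensures that each individual six-tuple sum is strictly positive; hence $12B > 0$, forcing $B > 0$. The six-nonnegative case follows identically with non-strict inequalities. Since $\{X, JX, Y, JY\}$ always span a real $4$-dimensional subspace regardless of $m\ge 2$, the construction goes through in every complex dimension.

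The main obstacle is the $\chi_i$-block computation. The $K(X,Y)$- and $K(X, JY)$-terms arising from mixed quantities like $R(X, JY, Y, JX)$ must be tracked carefully; the four combinations $\chi_1, \ldots, \chi_4$ are engineered precisely so that these $K$-contributions appear with opposite signs in $\mathring{R}(\chi_1,\chi_1)$ vs.\ $\mathring{R}(\chi_4,\chi_4)$ (and likewise $\chi_2$ vs.\ $\chi_3$) and telescope to zero, producing the clean value $2B$. The further cancellation of the $H(X)$, $H(Y)$ terms when averaging the six-tuples over the four possible choices of $\omega$ is then automatic once the diagonal contributions are recorded.
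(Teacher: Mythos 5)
Your proposal is correct in substance and is essentially the same argument as the paper's, just rotated. After normalization your nine tensors span exactly the nine-dimensional subspace of $S^2_0(T_pM)$ used in the paper's Section~3: $\psi_C$ is the paper's $\vp_1$, $\{\chi_1,\ldots,\chi_4\}$ is an orthogonal rotation of $\{\vp_2,\ldots,\vp_5\}$, and $\varphi_{12},\varphi_{34},\psi_A,\psi_B$ are $\vp_6,\ldots,\vp_9$. Consequently each of your four six-tuple sums equals the corresponding sum $\sum_{i=1}^5\mathring R(\vp_i,\vp_i)+\mathring R(\vp_k,\vp_k)$, since the first five terms in each case are the trace of $\mathring R$ over the same five-dimensional subspace. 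The paper first records this inequality as a purely Riemannian statement (Theorem~3.1), yielding $R_{1313}+R_{1414}+R_{2323}+R_{2424}>0$, and only then invokes the K\"ahler and first Bianchi identities to identify this sum with $2R(X,JX,Y,JY)$; you fold the K\"ahler reductions directly into the computation of the diagonal entries. Both routes give the same aggregate multiple of $B$.

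One small slip: the tensors as written are \emph{not} unit vectors under the paper's inner product $\langle A,B\rangle=\tr(A^TB)$ on $S^2(V)$. Each of $\chi_1,\ldots,\chi_4$, $\varphi_{12},\varphi_{34}$, $\psi_A,\psi_B,\psi_C$ has norm $2$ (e.g.\ $\|X\odot Y+JX\odot JY\|^2=2+2=4$), so they must all be divided by $2$ before applying six-positivity. Since they share the same norm, this is a uniform rescaling and the conclusion $B>0$ is unaffected, but as stated the six-tuples are not orthonormal and the claimed identity $\sum_{i=1}^4\mathring R(\chi_i,\chi_i)=2B$ holds only after this normalization.
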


Naturally, one might wonder what can be said for a Riemannian manifold with six-positive curvature operator of the second kind. Regarding this question, we prove that
\begin{theorem}\label{thm 6PCO Riem}
Let $(M^n,g)$ be a Riemannian manifold of dimension $n\geq 4$. If $M$ has six-positive (respectively, six-nonnegative) curvature operator of the second kind, then for any $p\in M$ and any orthonormal four-frame $\{e_1, e_2, e_3, e_4\}$ in $T_pM$, it holds that 
\begin{equation}\label{eq 4 sec}
    R_{1313}+R_{1414}+R_{2323}+R_{2424} > (\text{respectively, $\geq$}) \  0. 
\end{equation}
\end{theorem}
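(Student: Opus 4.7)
The plan is to exhibit six orthonormal traceless symmetric two-tensors $\varphi_1,\ldots,\varphi_6 \in S^2_0(T_pM)$ whose total contribution satisfies
\[
\sum_{i=1}^{6} \mathring R(\varphi_i,\varphi_i) = R_{1313}+R_{1414}+R_{2323}+R_{2424}.
\]
Once this is achieved, the conclusion is immediate from the definition of six-positivity: the left-hand side is bounded below by $\lambda_1+\cdots+\lambda_6$, the sum of the six smallest eigenvalues of $\mathring R$, which is strictly positive by hypothesis. The nonnegative version follows by replacing strict with weak inequalities throughout.

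The first four tensors are the natural mixed ones:
\[
\varphi_i = \sqrt 2 \, e_a \odot e_b \quad\text{for}\quad (a,b) \in \{(1,3),(1,4),(2,3),(2,4)\}.
\]
These are orthonormal in $S^2_0(T_pM)$, and using the definitions
\[
\mathring R(e_i\odot e_j) = \sum_{k,l} R_{iklj}\, e_k \odot e_l,\qquad \langle e_k \odot e_l, e_i \odot e_j\rangle = \tfrac12(\delta_{ki}\delta_{lj}+\delta_{kj}\delta_{li}),
\]
a short computation gives $\mathring R(\sqrt 2\, e_a\odot e_b, \sqrt 2\, e_a\odot e_b) = R_{abab}$; hence these four tensors contribute exactly $R_{1313}+R_{1414}+R_{2323}+R_{2424}$.

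The key technical step is to produce two more orthonormal tensors $\varphi_5, \varphi_6$ in the orthogonal complement of $\operatorname{span}(\varphi_1,\ldots,\varphi_4)$ inside $S^2_0(T_pM)$, whose combined contribution $\mathring R(\varphi_5,\varphi_5) + \mathring R(\varphi_6,\varphi_6)$ vanishes identically as a function of the algebraic curvature tensor. The complement contains the pure off-diagonal tensors $\sqrt 2\, e_1\odot e_2$ and $\sqrt 2\, e_3\odot e_4$, the trace-free diagonal combinations $(e_i\odot e_i - e_j\odot e_j)/\sqrt 2$ for $i,j\in\{1,2,3,4\}$, and (when $n\geq 5$) tensors involving $e_k$ for $k\geq 5$. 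The correct choice should mix off-diagonal and diagonal components so that, via the first Bianchi identity $R_{1234}+R_{1342}+R_{1423}=0$ together with the standard index symmetries of $R_{ijkl}$, the residual terms — the off-target sectional curvatures $R_{1212}$, $R_{3434}$, the various $R_{1234}$-type components, and mixed components such as $R_{1323}$ — cancel pairwise.

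The principal obstacle is precisely this last step. The naive pair $\{\sqrt 2\, e_1\odot e_2,\, \sqrt 2\, e_3\odot e_4\}$ contributes $R_{1212}+R_{3434}$ rather than zero; the standard diagonal pair $\{(e_1\odot e_1-e_2\odot e_2)/\sqrt 2,\, (e_3\odot e_3-e_4\odot e_4)/\sqrt 2\}$ yields the same nonzero residue. A more intricate mixing of the two types (possibly supplemented, for $n\geq 5$, by tensors involving $e_k$ with $k\geq 5$) will therefore be required. Verifying the algebraic identity $\mathring R(\varphi_5,\varphi_5)+\mathring R(\varphi_6,\varphi_6)=0$ for every algebraic curvature tensor is the core technical content of the argument; once it is established, the theorem follows at once from the opening displayed inequality.
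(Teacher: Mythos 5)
Your plan hinges on finding two orthonormal tensors $\varphi_5,\varphi_6$ (orthogonal to $\varphi_1,\ldots,\varphi_4$) with
\[
\mathring R(\varphi_5,\varphi_5)+\mathring R(\varphi_6,\varphi_6)=0
\]
for \emph{every} algebraic curvature tensor $R$. This cannot be done. Test against the curvature of the round sphere, $R_{ijkl}=g_{ik}g_{jl}-g_{il}g_{jk}$: for any traceless symmetric $\varphi$ one computes
\[
\mathring R(\varphi,\varphi)=\sum_{i,j,k,l}(\delta_{ik}\delta_{jl}-\delta_{il}\delta_{jk})\varphi_{il}\varphi_{jk}=\sum_{i,j}\varphi_{ij}^2-(\operatorname{tr}\varphi)^2=\|\varphi\|^2,
\]
so $\mathring R(\varphi,\varphi)=1$ for every unit $\varphi\in S^2_0$, and $\mathring R(\varphi_5,\varphi_5)+\mathring R(\varphi_6,\varphi_6)=2\neq 0$. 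Equivalently, for the sphere your proposed identity would read $6=R_{1313}+R_{1414}+R_{2323}+R_{2424}=4$, a contradiction. So the quantity $R_{1313}+R_{1414}+R_{2323}+R_{2424}$ simply is not the sum of $\mathring R$ over any fixed orthonormal six-frame in $S^2_0$; no amount of ``intricate mixing'' will produce such $\varphi_5,\varphi_6$. (There is also a minor normalization slip: with the paper's convention $\langle A,B\rangle=\operatorname{tr}(A^TB)$, the unit mixed tensors are $\tfrac{1}{\sqrt 2}\,e_a\odot e_b$, not $\sqrt 2\,e_a\odot e_b$; your inner-product formula is scaled by $\tfrac14$.)

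The paper circumvents exactly this obstruction by not relying on a single six-frame. It introduces nine tensors $\varphi_1,\ldots,\varphi_9$: the ``diagonal'' tensor $\varphi_1=\tfrac14(e_1\odot e_1+e_2\odot e_2-e_3\odot e_3-e_4\odot e_4)$, your four mixed tensors $\varphi_2,\ldots,\varphi_5$, and then $\varphi_6=\tfrac{1}{\sqrt2}e_1\odot e_2$, $\varphi_7=\tfrac{1}{\sqrt2}e_3\odot e_4$, $\varphi_8=\tfrac{1}{2\sqrt2}(e_1\odot e_1-e_2\odot e_2)$, $\varphi_9=\tfrac{1}{2\sqrt2}(e_3\odot e_3-e_4\odot e_4)$. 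It then applies six-nonnegativity to the \emph{four} different orthonormal six-frames $\{\varphi_1,\ldots,\varphi_5,\varphi_k\}$, $k=6,7,8,9$, and averages. Since $\mathring R(\varphi_6,\varphi_6)=\mathring R(\varphi_8,\varphi_8)=R_{1212}$ and $\mathring R(\varphi_7,\varphi_7)=\mathring R(\varphi_9,\varphi_9)=R_{3434}$, while $\mathring R(\varphi_1,\varphi_1)=\tfrac12(-R_{1212}-R_{3434}+R_{1313}+R_{1414}+R_{2323}+R_{2424})$, the unwanted $R_{1212}+R_{3434}$ cancels in the average and one gets $\tfrac32\bigl(R_{1313}+R_{1414}+R_{2323}+R_{2424}\bigr)\geq 0$. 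This averaging over several six-frames is the essential idea your proposal is missing; without it, the single-frame strategy runs into the dimensional count above.
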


Note that Theorem \ref{thm 6PCO Riem} is sharp on $\mathbb{S}^2 \times \mathbb{S}^2$, whose curvature operator of the second kind is six-nonnegative (but not six-positive) and \eqref{eq 4 sec} holds as an equality for suitable four-frames. 
Indeed, the strategy to prove Theorem \ref{thm 6PCO Riem} is to use $\mathbb{S}^2 \times \mathbb{S}^2$ as a model space.
It is worth mentioning that a similar strategy has been used to prove Theorem \ref{thm 4.5PCO} in \cite{Li22JGA} with $\mathbb{CP}^2$ and $\mathbb{S}^3 \times \mathbb{S}^1$ as model spaces. 
Although \eqref{eq 4 sec} alone does not even imply positive/nonnegative Ricci curvature for Riemannian manifolds, it implies positive/nonnegative orthogonal bisectional curvature for K\"ahler manifolds.

In addition, we use the normal form of $\mathring{R}$ constructed by Cao, Gursky and Tran in \cite{CGT21} for oriented four-manifolds to prove a rigidity result for $\mathbb{CP}^2$. 
\begin{theorem}\label{thm Kahler 2D}
A closed non-flat K\"ahler surface with $4\frac{1}{2}$-nonnegative curvature operator is isometric to $\mathbb{CP}^2$ with the Fubini-Study metric.
\end{theorem}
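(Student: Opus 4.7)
The plan is to combine the classification in Theorem \ref{thm 6NCO} with the Cao--Gursky--Tran normal form of $\mathring R$ on oriented four-manifolds, specialized to the K\"ahler setting. First, $4\tfrac{1}{2}$-nonnegativity implies six-nonnegativity: order the eigenvalues $\lambda_1\leq\cdots\leq\lambda_9$ of $\mathring R|_{S^2_0(T_pM)}$; if $\lambda_5<0$ then $\lambda_1,\ldots,\lambda_5$ are all strictly negative, and the hypothesis $\lambda_1+\lambda_2+\lambda_3+\lambda_4+\tfrac{1}{2}\lambda_5\geq 0$ is contradicted by the bound $\lambda_1+\lambda_2+\lambda_3+\lambda_4+\tfrac{1}{2}\lambda_5\leq\tfrac{9}{2}\lambda_5<0$, so $\lambda_5\geq 0$ and therefore $\lambda_1+\cdots+\lambda_6\geq 0$. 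By Theorem \ref{thm 6NCO}, $M$ is flat, biholomorphic to $\mathbb{CP}^2$, or isometric (up to scaling) to $\mathbb{S}^2\times\mathbb{S}^2$ with equal round factors. The non-flatness hypothesis removes the first. The third is ruled out by a direct computation: in an adapted frame with nonzero curvature components $R_{1212}=R_{3434}$, the definition of $\mathring R$ yields the nine eigenvalues on $S^2_0$ as $\{-1,0,0,0,0,1,1,1,1\}$ up to a uniform scaling, so $\lambda_1+\lambda_2+\lambda_3+\lambda_4+\tfrac{1}{2}\lambda_5=-1<0$, violating $4\tfrac{1}{2}$-nonnegativity. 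Therefore $M$ is biholomorphic to $\mathbb{CP}^2$.

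To upgrade this biholomorphism to an isometry with the Fubini--Study metric, I invoke the Cao--Gursky--Tran normal form for oriented four-manifolds. Choose orthonormal bases of $\Lambda^+T_p^*M$ and $\Lambda^-T_p^*M$ simultaneously diagonalizing $W^+$ and $W^-$, and use the isomorphism $S^2_0(T_pM)\cong\Lambda^+\otimes\Lambda^-$ to represent $\mathring R$ as an explicit $9\times 9$ symmetric matrix whose entries are built from the eigenvalues $w_i^\pm$ of $W^\pm$, the traceless Ricci $\mathring{\mathrm{Ric}}$, and $\scal$. The K\"ahler condition forces the unit K\"ahler form to be an eigenvector of $W^+$ with eigenvalue $\scal/6$ and the two-dimensional orthogonal complement in $\Lambda^+$ to carry the double eigenvalue $-\scal/12$, and it constrains $\mathring{\mathrm{Ric}}$ to be $J$-invariant. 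After these substitutions, $\mathring R$ depends only on $\scal$, the three eigenvalues of $W^-$, and the three-dimensional traceless Ricci tensor of the K\"ahler surface. Imposing the $4\tfrac{1}{2}$-nonnegativity inequality pointwise on this matrix, together with non-flatness, is expected to force $\mathring{\mathrm{Ric}}\equiv 0$ and $W^-\equiv 0$; the metric is then K\"ahler--Einstein on $\mathbb{CP}^2$, and by Bando--Mabuchi uniqueness it must be Fubini--Study up to scaling.

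The main obstacle is the last step: extracting $\mathring{\mathrm{Ric}}\equiv 0$ and $W^-\equiv 0$ from the $4\tfrac{1}{2}$-nonnegativity inequality applied to the explicit matrix. The argument will rely on a careful case analysis of the eigenvalue orderings of the $9\times 9$ matrix as $W^-$ and $\mathring{\mathrm{Ric}}$ vary, on sharp Weyl-type partial-sum inequalities for symmetric matrices, and on identifying the equality cases in those estimates.
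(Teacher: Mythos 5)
Your first reduction is correct and tidy: $4\tfrac12$-nonnegativity does imply six-nonnegativity by the ordered-eigenvalue argument you give, so Theorem~\ref{thm 6NCO} applies, and your computation of the eigenvalues of $\mathring R$ on the equal-factor $\mathbb{S}^2\times\mathbb{S}^2$ (namely $\{-1,0,0,0,0,1,1,1,1\}$ up to scale) correctly shows that $\lambda_1+\cdots+\lambda_4+\tfrac12\lambda_5<0$ there, ruling out the split case. This cleanly yields that $M$ is biholomorphic to $\mathbb{CP}^2$, and it is a genuinely different route from the paper for this preliminary fact (the paper never separately establishes the biholomorphism; it goes straight at the metric). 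The reference to Bando--Mabuchi in place of Hitchin's theorem would also be admissible once you had the Kähler--Einstein conclusion in hand.

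However, the crucial step is exactly the one you flag as ``the main obstacle'' and do not carry out: extracting $\mathring{\mathrm{Ric}}\equiv 0$ and $W^-\equiv 0$ from $4\tfrac12$-nonnegativity. Your preliminary step does not feed into this at all --- knowing $M$ is biholomorphic to $\mathbb{CP}^2$ gives no pointwise curvature information that helps constrain $W^-$ or $\mathring{\mathrm{Ric}}$, so the reduction, while correct, is essentially a detour. The paper's actual content here is a concrete, finite algebraic argument using the Cao--Gursky--Tran block form \eqref{eq 6.0}--\eqref{eq 6.01} together with the Kähler identities $\l_1=\tfrac{S}{6}$, $\l_2=\l_3=-\tfrac{S}{12}$: one choice of five basis elements in the test inequality forces $\mu_1\le 0$, hence $W^-\equiv 0$; then for each off-diagonal entry $(O_k)_{ij}$ a two-parameter family of orthonormal five-frames (rotating $\vp_a$ into $\vp_b$, or taking $\psi=\tfrac{1}{\sqrt2}(\vp_a\pm\vp_b)$) combined with the already-computed diagonal entries forces that entry to vanish, so $M$ is Einstein; and then Hitchin finishes. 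Your write-up correctly anticipates that this is where the work lies, but ``is expected to force'' and ``will rely on a careful case analysis'' do not constitute a proof. As it stands, the argument has a gap precisely at the step that is the theorem's content.
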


Finally, we would like to point out that some results in this paper, including Theorems \ref{thm main}, \ref{thm 6PCO kahler} and \ref{thm Kahler 2D}, have been generalized to higher dimensions in a later work by the author \cite{Li22Kahler}.

This paper is organized as follows. 
In Section 2, we give an introduction to the curvature operator of the second kind.
The proof of Theorem \ref{thm 6PCO Riem} is presented in Section 3. 
In Section 4, we prove Theorems \ref{thm 6PCO kahler} and \ref{thm main}. 
The proofs of Theorems \ref{thm 6NCO} and \ref{thm Kahler 2D} are given in Sections 5 and 6, respectively.

\section{The curvature operator of the second kind}
Let $(V,g)$ be a Euclidean vector space of dimension $n\geq 2$.  We always identify $V$ with its dual space $V^*$ via the metric $g$. 
The space of bilinear forms on $V$ is denote by $T^2(V)$, and it splits as 
\begin{equation*}
    T^2(V)=S^2(V)\oplus  \Lambda^2(V),
\end{equation*}
where $S^2(V)$ is the space of symmetric two-tensors on $V$ and $\Lambda^2(V)$ is the space of two-forms on $V$.
Our conventions on symmetric products and wedge products are that, for $u$ and $v$ in $V$, $\odot$ denotes the symmetric product defined by 
\begin{equation*}
    u \odot v  =u \otimes v + v \otimes u,
\end{equation*}
and 
$\wedge$ denotes the wedge product defined by 
\begin{equation*}
    u \wedge v  =u \otimes v -v \otimes u.
\end{equation*}
The inner product $g$ on $V$ naturally induces inner products on $S^2(V)$ and $\Lambda^2(V)$, respectively. 
To be consistent with \cite{CGT21}, the inner product on $S^2(V)$ is defined as 
\begin{equation*}
    \langle A, B \rangle =\tr(A^T B),
\end{equation*}
and the inner product on $\Lambda^2(V)$ is defined as 
\begin{equation*}
    \langle A, B \rangle =\frac{1}{2}\tr(A^T B).
\end{equation*}
In particular, if $\{e_1, \cdots, e_n\}$ is an orthonormal basis for $V$, then 
$\{e_i \wedge e_j\}_{1\leq i < j \leq n}$ is an orthonormal basis for $\Lambda^2(V)$ and $\{\frac{1}{\sqrt{2}}e_i \odot e_j\}_{1\leq i < j \leq n} \cup \{\frac{1}{2} e_i \odot e_i\}_{1\leq i\leq n}$ is an orthonormal basis for $S^2(V)$.

The space of symmetric two-tensors on $\Lambda^2(V)$ has the orthogonal decomposition 
\begin{equation*}
    S^2(\Lambda^2 (V))=S^2_B(\Lambda^2 (V)) \oplus \Lambda^4 (V),
\end{equation*}
where $S^2_B(\Lambda^2 (V))$ consists of all tensors $R\in S^2(\Lambda^2 (V))$ that also satisfy the first Bianchi identity. 
Any $R\in S^2_B(\Lambda^2 (V))$ is called an algebraic curvature operator. 

By the symmetries of $R\in  S^2_B(\Lambda^2 (V))$ (not including the first Bianchi identity), there are (up to sign) two ways that $R$ can induce a symmetric linear map $R:T^2(V) \to T^2(V)$. 
The first one, denoted by $\hat{R}: \Lambda^2(V) \to \Lambda^2(V)$ in this paper, is the so-called curvature operator defined by
\begin{equation}\label{eq R hat}
    \hat{R}(e_i\wedge e_j) =\frac 1 2 \sum_{k,l}R_{ijkl}e_k \wedge e_l,
\end{equation}
where $\{e_1, \cdots, e_n\}$ is an orthonormal basis of $V$. 
Note that if the eigenvalues of $\hat{R}$ are all greater than or equal to $\kappa \in \R$, then all the sectional curvatures of $R$ are bounded from below by $\kappa$.

The second one, denoted by $\mathring{R}:S^2(V) \to S^2(V)$, is defined by 
\begin{equation}\label{eq R ring}
    \mathring{R}(e_i \odot e_j) =\sum_{k,l}R_{iklj} e_k \odot e_l.
\end{equation}
However, on contrary to the case of $\hat{R}$, all eigenvalues of $\mathring{R}$ being nonnegative implies all the sectional curvatures of $R$ are zero, that it, $R\equiv 0$ \footnote{This follows from the observation that the trace of $\mathring{R}:S^2(V) \to S^2(V)$ is equal to $\frac{S}{2}$ and $\mathring{R}(g,g)=-S$, where $S$ denotes the scalar curvature.}. 
The new feature here is that $S^2(V)$ is not irreducible under the action of the orthogonal group $O(V)$ of $V$. 
The space $S^2(V)$ splits into $O(V)$-irreducible subspaces as 
\begin{equation*}
    S^2(V)=S^2_0(V) \oplus \R g,
\end{equation*}
where $S^2_0(V)$ denotes the space of traceless symmetric two-tensors on $V$.
The map $\mathring{R}$ defined in \eqref{eq R ring} then induces a symmetric bilinear form
\begin{equation}\label{eq 2.3}
    \mathring{R}:S^2_0(V) \times S^2_0(V) \to \R
\end{equation}
by restriction to $S^2_0(V)$. 
Note that if all the eigenvalues of $\mathring{R}$ restricted to $S^2_0(V)$ are bounded from by $\kappa \in \R$, then the sectional curvatures of $R$ are bounded from below by $\kappa$. 
It should be noted that $\mathring{R}$ does not preserve the subspace $S^2_0(V)$ in general, but it does, for instance, when $R$ is Einstein.

Following \cite{Nishikawa86}, we call the symmetric bilinear form $\mathring{R}$ in \eqref{eq 2.3}
the \textit{curvature operator of the second kind}\footnote{It was pointed out in \cite{NPW22} that the curvature operator of the second kind can also be interpreted as the self-adjoint operator $\pi \circ \mathring{R}:S^2_0(V) \to S^2_0(V)$ with $\pi$ being the projection from $S^2(V)$ onto $S^2_0(V)$. This is equivalent to its interpretation as the symmetric bilinear form in \eqref{eq 2.3}, as 
$  
\mathring{R}(\vp,\psi)=\langle \mathring{R}(\vp), \psi\rangle =\langle \pi \circ \mathring{R} (\vp), \psi \rangle=  (\pi \circ \mathring{R})(\vp,\psi)
$
for any $\vp, \psi \in S^2_0(V)$.}, to distinguish it from the map $\hat{R}$ defined in \eqref{eq R hat}, which he called the \textit{curvature operator of the first kind}.

The action of the Riemann curvature tensor on symmetric two-tensors indeed has a long history. It appeared for K\"ahler manifolds in the study of deformation of complex analytic structures by Calabi and Vesentini \cite{CV60}, who introduced the self-adjoint operator $\xi_{\a \b} \to R^{\rho}_{\ \a\b}{}^{\sigma} \xi_{\rho \sigma}$ from $S^2(T^{1,0}_p M)$ to itself, and computed the eigenvalues of this operator on Hermitian symmetric spaces of classical type, with the exceptional ones handled shortly afterward by Borel \cite{Borel60}. In the Riemannian setting, the operator $\mathring{R}$ arises naturally in the context of deformations of Einstein structure in Berger and Ebin \cite{BE69} (see also \cite{Koiso79a, Koiso79b} and \cite{Besse08}). 
In addition, it appears naturally in the Bochner-Weitzenb\"ock formulas for symmetric two-tensors (see for exampe \cite{MRS20}), for differential forms in \cite{OT79} and for Riemannian curvature tensors in \cite{Kashiwada93}.  
In another direction, curvature pinching estimates for $\mathring{R}$ was studied by Bourguignon and Karcher \cite{BK78}, and they also calculated eigenvalues of $\mathring{R}$ on the complex projective space with the Fubini-Study metric and the quaternionic projective space with its canonical metric. 
Nevertheless, the curvature operator of the second kind is significantly less investigated than the curvature operator of the first kind. 

Let $N=\dim(S^2_0(V))=\frac{(n-1)(n+2)}{2}$ and $\{\vp_i\}_{i=1}^N$ be an orthonormal basis of $S^2_0(V)$. The $N\times N$ matrix $\mathring{R}(\vp_i, \vp_j)$ is called the matrix representation of $\mathring{R}$ with respect to the orthonormal basis $\{\vp_i\}_{i=1}^N$. 
The eigenvalues of $\mathring{R}$ refers to the eigenvalues of any of its matrix representation. 
This is independent of the choices of the orthonormal bases because matrix representations of $\mathring{R}$ with respect to different orthonormal bases of $S^2_0(V)$ are similar to each other. 

For a positive integer $1\leq k \leq N$, we say $R\in S^2_B(\Lambda^2(V))$ has $k$-nonnegative curvature operator of the second kind if the sum of the smallest $k$-eigenvalues of $\mathring{R}$ is nonnegative. 
This definition was extended to all $k\in [1,N]$ in \cite{Li22JGA} as follows. 
\begin{definition}
Let $N=\frac{(n-1)(n+2)}{2}$ and $\a \in [1, N]$. 
\begin{enumerate}
    \item We say $R\in S^2_B(\Lambda^2(V))$ has $\a$-nonnegative curvature operator of the second kind if for any orthonormal basis $\{\vp_i\}_{i=1}^{N}$ of $S^2_0(V)$, it holds that 
\begin{equation*}\label{eq def R}
    \sum_{i=1}^{\lfloor \a \rfloor} \mathring{R}(\vp_i,\vp_i) +(\a -\lfloor \a \rfloor) \mathring{R}(\vp_{\lfloor \a \rfloor+1},\vp_{\lfloor \a \rfloor+1}) \geq  0. 
\end{equation*}
If the inequality is strict then $R$ is said to have $\a$-positive curvature operator of the second kind.
 \item We say $R\in S^2_B(\Lambda^2(V))$ has $\a$-nonpositive (respectively, $\a$-negative) curvature operator of the second kind if $-R$ has $\a$-nonnegative (respectively, $\a$-positive) curvature operator of the second kind.
\end{enumerate} 
\end{definition}

\begin{definition}
A Riemannian manifold $(M^n,g)$ is said to have $\a$-nonnegative (respectively, $\a$-positive, $\a$-nonpositive, $\a$-negative) curvature operator of the second kind if $R_p \in S^2_B(\Lambda^2 T_pM)$ has $\a$-nonnegative (respectively, $\a$-positive, $\a$-nonpositive, $\a$-negative) curvature operator of the second kind for each $p\in M$.    
\end{definition}

\section{Six-positivity}

In this section, we prove Theorem \ref{thm 6PCO Riem}. 
\begin{proof}[Proof of Theorem \ref{thm 6PCO Riem}]
The key idea is to apply $\mathring{R}$ to the eigenvectors of the curvature operator of the second kind on the model space $\mathbb{S}^2 \times \mathbb{S}^2$. 

Fix $p\in M$ and let $V=T_pM$. 
Given an orthonormal four-frame $\{e_1, e_2,e_3,e_4\}$ in $V$, we define the following traceless symmetric two-tensors on $V$:
\begin{equation*}
 \vp_1 = \frac 1 4 \left( e_1\odot e_1+e_2\odot e_2 -e_3\odot e_3 -e_4 \odot e_4 \right),    
\end{equation*}
\begin{eqnarray*}
\vp_2 &=& \frac{1}{\sqrt{2}}e_1\odot e_3, \\
\vp_3 &=& \frac{1}{\sqrt{2}}e_1\odot e_4, \\
\vp_4 &=& \frac{1}{\sqrt{2}}e_2 \odot e_3, \\
\vp_5 &=& \frac{1}{\sqrt{2}}e_2 \odot e_4, \\
\vp_6 &=& \frac{1}{\sqrt{2}}e_1 \odot e_2, \\
\vp_7 &=& \frac{1}{\sqrt{2}}e_3 \odot e_4, 
\end{eqnarray*}
and
\begin{eqnarray*}
\vp_8 &=& \frac{1}{2\sqrt{2}}\left( e_1 \odot e_1 -e_2\odot e_2 \right), \\
\vp_9 &=& \frac{1}{2\sqrt{2}} \left( e_3 \odot e_3 -e_4\odot e_4 \right). 
\end{eqnarray*}
One easily verifies that $\{\vp_1, \cdots, \vp_9\}$ form an orthonormal subset in $S^2_0(V)$.

Suppose that $M$ has six-nonnegative curvature operator of the second kind, then we have
\begin{equation*}
    \sum_{i=1}^5\mathring{R}(\vp_i,\vp_i)+\mathring{R}(\vp_k,\vp_k) \geq 0
\end{equation*}
for $k=6, 7, 8, 9$.
Averaging the above four inequalities yields
\begin{equation}\label{eq key}
    \sum_{i=1}^5\mathring{R}(\vp_i,\vp_i) + \frac{1}{4} \left(\mathring{R}(\vp_6,\vp_6)+\mathring{R}(\vp_7,\vp_7)  + \mathring{R}(\vp_8,\vp_8) + \mathring{R}(\vp_9,\vp_9) \right) \geq 0. 
\end{equation}

Next, we calculate $\mathring{R}(\vp_i,\vp_i)$ for $1\leq i \leq 9$. 
Noticing that the only non-vanishing terms of $\vp_1$ are 
\begin{equation*}
    (\vp_1)_{11}=(\vp_1)_{11}=\frac 1 2 \text{ and }  (\vp_1)_{33}=(\vp_1)_{44}=-\frac 1 2,
\end{equation*}
we compute that 
\begin{eqnarray*}
\mathring{R}(\vp_1,\vp_1) &=&  \sum_{i,j,k,l=1}^n R_{ijkl}(\vp_1)_{il}(\vp_1)_{jk} \\
&=& \sum_{i,j=1}^n R_{ijji}(\vp_1)_{ii}(\vp_1)_{jj} \\
&=& \frac 1 2 \left(-R_{1212}-R_{3434}+R_{1313}+R_{2424}+R_{1414}+R_{2323} \right)
\end{eqnarray*}
Similarly, direct calculation shows that
\begin{eqnarray*}
\mathring{R}(\vp_2,\vp_2) &=& R_{1313} , \\
\mathring{R}(\vp_3,\vp_3) &=& R_{1414} , \\
\mathring{R}(\vp_4,\vp_4) &=& R_{2323} , \\
\mathring{R}(\vp_5,\vp_5) &=& R_{2424}, \\
\mathring{R}(\vp_6,\vp_6) &=& R_{1212},  \\
\mathring{R}(\vp_7,\vp_7) &=& R_{3434}, \\
\mathring{R}(\vp_8,\vp_8) &=& R_{1212},  \\
\mathring{R}(\vp_9,\vp_9) &=& R_{3434}. 
\end{eqnarray*}
Plugging the above nine identities into \eqref{eq key} produces, after simplification, that 
\begin{equation*}
    \frac{3}{2} \left(R_{1313}+R_{1414}+R_{2323}+R_{2424}  \right) \geq 0.
\end{equation*}

Similarly, if $M$ has six-positive curvature operator of the second kind, then \eqref{eq key} becomes strict and we conclude that 
\begin{equation*}
    R_{1313}+R_{1414}+R_{2323}+R_{2424}  > 0.
\end{equation*}
The proof is complete.
\end{proof}

\section{K\"ahler manifolds}

In the section, we present the proofs of Theorems \ref{thm 6PCO kahler} and \ref{thm main}. 

\begin{proof}[Proof of Theorem \ref{thm 6PCO kahler}]
Recall that on a K\"ahler manifold there exists a section $J$ of the endomorphism bundle $\text{End}(TM)$ with following properties: 
\begin{enumerate}
    \item $J$ is parallel;
    \item for each point $p\in M$, we have $J^2=-\id$ and $g(X,Y)=g(JX,JY)$ for all $X,Y\in T_pM$;
    \item the Riemann curvature tensor satisfies 
    $$R(X,Y,Z,W)=R(X,Y,JZ,JW)$$ for all $X,Y,Z,W\in T_pM$.
\end{enumerate} 

Given two unit vectors $X,Y \in T_pM$ satisfying $g(X,Y)=g(X,JY)=0$, we 
have by (2) that
\begin{equation*}
    g(JX, Y)=g(J^2X,JY)=-g(X,JY) =0
\end{equation*} 
and $$g(JX,JY)=g(X,Y)=0.$$ 
Thus, $\{X,JX,Y,JY\}$ is an orthonormal four-frame if $g(X,Y)=g(X,JY)=0$.

Since $M$ has six-positive curvature operator of the second kind, we have by Theorem \ref{thm 6PCO Riem} that
\begin{align*}
   & R(X,Y,X,Y)+R(JX,Y,JX,Y)\\
 &+ R(X,JY,X,JY)+R(JX,JY,JX,JY) > 0.
\end{align*}
On the other hand, it follows from the first Bianchi identity and (3) that 
\begin{eqnarray*}
 2 R(X,JX,Y,JY)& = & R(X,Y,X,Y)+R(JX,Y,JX,Y)\\
 &&+ R(X,JY,X,JY)+R(JX,JY,JX,JY).
\end{eqnarray*}
Thus, we conclude that 
\begin{equation}\label{eq 4.3}
    R(X,JX,Y,JY)>0
\end{equation}
for all unit vectors $X,Y \in T_pM$ satisfying $g(X,Y)=g(X,JY)=0$.
In other words, $M$ has positive orthogonal bisectional curvature. 

Similarly, six-nonnegative curvature operator of the second kind implies nonnegative orthogonal bisectional curvature for K\"ahler manifolds.

\end{proof}

\begin{proof}[Proof of Theorem \ref{thm main}]
By Theorem \ref{thm 6PCO kahler}, if $M$ has six-positive curvature operator of the second kind, then $M$ has positive orthogonal bisectional curvature. Theorem \ref{thm main} follows from the classification of closed K\"ahler manifolds with positive orthogonal bisectional curvature (see for example \cite[Corollary 3.2]{GZ10} or \cite{Wilking13}). 

\end{proof}

\section{Rigidity of $\mathbb{S}^2 \times \mathbb{S}^2$}

We prove Theorem \ref{thm 6NCO} in this section. We first prove the following lemma.

\begin{lemma}\label{lemma 2 by 2}
Suppose the Riemannian product manifold $(M,g)=(M_1,g_1) \times (M_2,g_2)$ with $\dim(M_1)=\dim(M_2)=2$ has six-nonnegative curvature operator of the second kind. Then either $M$ is flat or the universal cover of $M$ is isometric to $\mathbb{S}^2 \times \mathbb{S}^2$ with the same round metric on both factors. 
\end{lemma}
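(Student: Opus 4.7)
The approach is to analyze the spectrum of $\mathring{R}$ pointwise on the product, using as an orthonormal basis of $S^2_0(T_pM)$ the nine tensors $\vp_1,\ldots,\vp_9$ already introduced in the proof of Theorem \ref{thm 6PCO Riem}. Fix $p=(p_1,p_2)\in M$, set $K_1:=K_1(p_1)$ and $K_2:=K_2(p_2)$ for the Gaussian curvatures of the two factors, and choose orthonormal frames $\{e_1,e_2\}$ for $T_{p_1}M_1$ and $\{e_3,e_4\}$ for $T_{p_2}M_2$. Under the product structure, the only nonzero components of $R$ are $R_{1212}=K_1$ and $R_{3434}=K_2$ along with their symmetries; every component mixing indices from the two factors vanishes.

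Applying the formulas for $\mathring{R}(\vp_i,\vp_i)$ derived in the proof of Theorem \ref{thm 6PCO Riem} to this product curvature tensor gives
\begin{align*}
\mathring{R}(\vp_1,\vp_1) &= -\tfrac{K_1+K_2}{2}, \\
\mathring{R}(\vp_j,\vp_j) &= 0 \text{ for } j=2,3,4,5, \\
\mathring{R}(\vp_6,\vp_6)=\mathring{R}(\vp_8,\vp_8) &= K_1, \\
\mathring{R}(\vp_7,\vp_7)=\mathring{R}(\vp_9,\vp_9) &= K_2.
\end{align*}
Six-nonnegativity is equivalent to $\sum_{i=1}^{6}\mathring{R}(\psi_i,\psi_i)\geq 0$ for every orthonormal six-frame $\{\psi_i\}\subset S^2_0(T_pM)$, and in particular for every six-element subset of $\{\vp_1,\ldots,\vp_9\}$. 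Testing $\{\vp_1,\vp_2,\vp_3,\vp_4,\vp_5,\vp_6\}$ yields $(K_1-K_2)/2\geq 0$; testing $\{\vp_1,\vp_2,\vp_3,\vp_4,\vp_5,\vp_7\}$ yields $(K_2-K_1)/2\geq 0$; together these force $K_1(p_1)=K_2(p_2)$. Testing $\{\vp_2,\vp_3,\vp_4,\vp_5,\vp_6,\vp_8\}$ gives $2K_1\geq 0$.

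Because $K_1$ depends only on $p_1$ and $K_2$ only on $p_2$, the identity $K_1(p_1)=K_2(p_2)$ for all $(p_1,p_2)$ forces both to be the same nonnegative constant $c$. If $c=0$ then $M$ is flat. If $c>0$, each factor $(M_i,g_i)$ is a closed surface of constant positive Gaussian curvature $c$, so its universal cover is the round $2$-sphere of radius $1/\sqrt{c}$, whence the universal cover of $M$ is isometric to $\mathbb{S}^2\times\mathbb{S}^2$ endowed with the same round metric on both factors.

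The main obstacle is really just the eigenvalue table above; once that is in hand, the rest is the short combinatorial argument. A mild subtlety is that $\mathring{R}(\vp_1)$ does not itself lie in $S^2_0$ (its trace is proportional to $K_2-K_1$), but this is harmless since only the bilinear pairing $\mathring{R}(\vp_1,\vp_1)=\langle\mathring{R}(\vp_1),\vp_1\rangle$ enters, and $\vp_1\in S^2_0$ automatically discards any trace component.
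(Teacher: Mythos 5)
Your proof is correct and follows essentially the same approach as the paper: both use the orthonormal family $\{\vp_1,\ldots,\vp_9\}$ from Section 3, compute the same diagonal values of $\mathring{R}$ on a product curvature tensor, and test six-element subsets to force $R_{1212}=R_{3434}\geq 0$. The only cosmetic differences are that you make explicit the step that $K_1(p_1)=K_2(p_2)$ for all $(p_1,p_2)$ forces both to be the same constant (the paper states this more tersely), and you get by with three test sets rather than the paper's four; the closing remark about $\mathring{R}(\vp_1)$ having a trace component is a nice, accurate observation but was already implicit in the paper's use of $\mathring{R}$ as a bilinear form on $S^2_0$.
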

\begin{proof}
Pick $p\in M_1$, $q\in M_2$ and then choose an orthonormal basis $\{e_1, e_2\}$ of $T_pM_1$ and an orthonormal basis $\{e_3, e_4\}$ of $T_q M_2$. 
Then $\{e_1, e_2, e_3, e_4\}$ is an orthonormal basis of $T_{p,q}M$.
Let $\{\vp_1, \cdots, \vp_9\}$ be the basis of $S^2_0(T_{p,q}M)$ defined the same as in Section 3. Then we have 
\begin{equation*}
    \mathring{R}(\vp_1,\vp_1)=-\frac 1 2 \left(R_{1212}+R_{3434} \right),
\end{equation*}
\begin{equation*}
    \mathring{R}(\vp_i,\vp_i)=0 \text{ for } i=2,3,4,5, 
\end{equation*}
and 
\begin{eqnarray*}
    \mathring{R}(\vp_6,\vp_6)=\mathring{R}(\vp_8,\vp_8)=R_{1212};\\
    \mathring{R}(\vp_7,\vp_7)=\mathring{R}(\vp_9,\vp_9)=R_{3434}.\\
\end{eqnarray*}
Since $M$ has six-nonnegative curvature operator of the second kind, we get 
\begin{eqnarray*}
2R_{1212} &=& \sum_{i=2}^5 \mathring{R}(\vp_i,\vp_i) +\mathring{R}(\vp_6,\vp_6)+\mathring{R}(\vp_8,\vp_8) \geq 0; \\
2R_{3434} &=& \sum_{i=2}^5 \mathring{R}(\vp_i,\vp_i) +\mathring{R}(\vp_7,\vp_7)+\mathring{R}(\vp_9,\vp_9) \geq 0; \\
\end{eqnarray*}
and 
\begin{eqnarray*}
\frac 1 2 \left(R_{3434}-R_{1212} \right) &=& \sum_{i=2}^5 \mathring{R}(\vp_i,\vp_i) +\mathring{R}(\vp_6,\vp_6)+\mathring{R}(\vp_1,\vp_1) \geq 0; \\
\frac 1 2 \left(R_{1212}-R_{3434} \right) &=& \sum_{i=2}^5 \mathring{R}(\vp_i,\vp_i) +\mathring{R}(\vp_7,\vp_7)+\mathring{R}(\vp_1,\vp_1) \geq 0. \\
\end{eqnarray*}
Therefore, we must have 
$$R_{1212}=R_{3434}\geq 0.$$ 
It follows that both $M_1$ and $M_2$ have nonnegative constant scalar curvature, so $M$ is either flat or the universal cover of $M$ isometric to $\mathbb{S}^2 \times \mathbb{S}^2$, where both factors are equipped with the same round metric.
\end{proof}

\begin{proof}[Proof of Theorem \ref{thm 6NCO}]
Since $M$ has six-nonnegative curvature operator of the second kind, it has nonnegative orthogonal bisectional curvature by Theorem \ref{thm 6PCO kahler}. 

Let $\widetilde{M}$ be the universal cover of $M$.
According to \cite[Theorem 1.3]{GZ10}, if $M$ is non-flat, then $\widetilde{M}$ is either biholomorphic to $\mathbb{CP}^2$ or splits isometrically as $M_1 \times M_2$ with $\dim(M_1)=\dim(M_2)=2$. 
In view of Lemma \ref{lemma 2 by 2},  
both $M_1$ and $M_2$ are isometric to $\mathbb{S}^2$ with the same round metric.
Hence $M$ is either biholomorphic to $\mathbb{CP}^2$ or isometric to $\mathbb{S}^2 \times \mathbb{S}^2$. 
\end{proof}

\section{Rigidity of $\mathbb{CP}^2$}

Let $(M^4,g)$ be an oriented four-manifold. 
The Hodge star operator $ *: \Lambda^2 \to \Lambda^2$, where $\Lambda^2$ is the bundle of two-forms, induces a splitting 
\begin{equation*}
    \Lambda^2 = \Lambda^+ \oplus \Lambda^1, 
\end{equation*}
where $\Lambda^\pm$ is the $\pm1$-eigenspace of $*$. 
As a consequence, the curvature operator $\hat{R}: \Lambda^2 \to \Lambda^2$ decomposes as 
\begin{equation*}
    \begin{pmatrix}
        \frac{S}{12} \id +W^+ & \overset{\circ}{\Ric} \\
        \overset{\circ}{\Ric} & \frac{S}{12} \id +W^-,
    \end{pmatrix}
\end{equation*}
where $W^\pm$ are the restriction of the Weyl curvature tensor to $\Lambda^\pm$.

Analogous to this decomposition for $\hat{R}$, Cao, Gursky and Tran \cite{CGT21} proved the following block decomposition for the matrix associated to $\mathring{R}$.
\begin{theorem}\label{thm CGT basis}
    Let $(M^4,g)$ be an oriented four-manifold. Then there exists a basis of $S^2_0(TM)$ with respect to which the matrix associated to $\mathring{R}$ is given by 
\begin{equation}\label{eq 6.0}
    \mathring{R}=
\begin{pmatrix}
    D_1 &  O_1 & O_2 \\
    -O_1 & D_2 & O_3 \\
    -O_2 & -O_3 & D_3
\end{pmatrix},
\end{equation}
and the $D_i$'s are diagonal matrices given by 
\begin{equation}\label{eq 6.01}
    D_i=
\begin{pmatrix}
    -(\l_i+\mu_1)+\frac{1}{12}S & & \\
    & -(\l_i+\mu_2)+\frac{1}{12}S & \\
    & & -(\l_i+\mu_3)+\frac{1}{12}S
\end{pmatrix},
\end{equation}
where $\{\l_1,\l_2, \l_3\}$ are the eigenvalues of $W^+$ and $\{\mu_1,\mu_2,\mu_3\}$ are the eigenvalues of $W^-$. Moreover, $O_1,O_2,O_3$ are skew-symmetric $3\times 3$ matrices which vanish if and only if $(M^4,g)$ is Einstein.
\end{theorem}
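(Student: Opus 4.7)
The plan is to exploit the exceptional feature of dimension four that $S^2_0(V)$ is isomorphic, as an $\SO(4)$-module, to $\Lambda^+ \otimes \Lambda^-$ (both of dimension $9$), where $V = T_pM$. An explicit realization, viewing $2$-forms as skew-symmetric endomorphisms of $V$ via $g$, is
\begin{equation*}
\Phi(\alpha \otimes \beta) = \alpha\beta + \beta\alpha.
\end{equation*}
The output is symmetric by construction, and traceless because $\tr(\alpha\beta)$ is proportional to $\langle \alpha, \beta \rangle$, which vanishes when $\alpha \in \Lambda^+$ and $\beta \in \Lambda^-$. Injectivity, and hence bijectivity by the dimension count, can be checked directly on the standard basis $\{\tfrac{1}{\sqrt 2}(e_1\wedge e_2 \pm e_3\wedge e_4),\ldots\}$ of $\Lambda^\pm$ coming from any orthonormal frame $\{e_a\}$ of $V$.

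With $\Phi$ in hand, I would choose orthonormal eigenbases $\{\eta_i^+\}_{i=1}^3$ of $W^+$ and $\{\eta_j^-\}_{j=1}^3$ of $W^-$ with eigenvalues $\{\lambda_i\}$ and $\{\mu_j\}$, and define $\varphi_{ij} := \Phi(\eta_i^+ \otimes \eta_j^-)$ (normalized so that $\Phi$ is an isometry). Ordering $\{\varphi_{ij}\}$ lexicographically with the $\Lambda^+$-index $i$ on the outside yields the anticipated $3\times 3$ block structure of \eqref{eq 6.0}: the $(i,k)$-block is simply the $3\times 3$ matrix $\mathring{R}(\varphi_{ij},\varphi_{kl})_{j,l=1}^3$.

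Next, I would plug in the canonical Ricci decomposition
\begin{equation*}
R = \tfrac{S}{24}\, g \owedge g \;+\; \tfrac{1}{2}\, \overset{\circ}{\Ric} \owedge g \;+\; W^+ \;+\; W^-
\end{equation*}
and compute the four contributions to $\mathring{R}(\varphi_{ij},\varphi_{kl})$ separately. Since $W^\pm$ are diagonal in the chosen bases, a direct computation shows that $W^+$ contributes $-\lambda_i\,\delta_{ik}\delta_{jl}$, $W^-$ contributes $-\mu_j\,\delta_{ik}\delta_{jl}$, and the scalar piece contributes $\tfrac{S}{12}\,\delta_{ik}\delta_{jl}$; these sum to exactly the diagonal blocks $D_i$ of \eqref{eq 6.01}. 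Everything else in the matrix must therefore come from the $\overset{\circ}{\Ric}\owedge g$ term, which visibly vanishes precisely when $(M,g)$ is Einstein, giving the characterization at the end of the statement.

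The main technical obstacle is establishing the skew-symmetry of the off-diagonal blocks $O_1, O_2, O_3$. Self-adjointness of $\mathring{R}$ already forces the $(i,k)$-block to be the transpose of the $(k,i)$-block, so the minus-sign pattern in \eqref{eq 6.0} is \emph{equivalent} to each $O_i$ being skew-symmetric. To establish this, I would compute $(\overset{\circ}{\Ric}\owedge g)(\varphi_{ij},\varphi_{kl})$ explicitly in a reference frame in which the $\eta^\pm_i$ have the standard $\pm$-form, and exploit the parity of the Kulkarni--Nomizu product $\overset{\circ}{\Ric}\owedge g$ under exchanging a $\Lambda^+$-factor with a $\Lambda^-$-factor: this parity should force antisymmetry in the $(j,l)$ pair for each fixed $(i,k)$, which is precisely the skew-symmetry of $O_i$. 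Careful bookkeeping of the normalization constants (the inner products on $\Lambda^2$ and $S^2$ chosen in Section~2 and the convention for $\owedge$) will be essential to land on the signs exactly as displayed.
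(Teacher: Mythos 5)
The paper does not prove Theorem~\ref{thm CGT basis}; it is quoted verbatim from Cao, Gursky and Tran \cite{CGT21}, so there is no internal proof to compare your proposal against, and you should be assessed on your own merits. Your approach is the natural one and, as far as I can tell, closely tracks the argument in \cite{CGT21}: use the dimension-four isomorphism $S^2_0(V)\cong\Lambda^+\otimes\Lambda^-$, pass to eigenbases $\{\eta_i^+\}$, $\{\eta_j^-\}$ of $W^\pm$, and decompose $\mathring{R}$ along the Ricci decomposition. Your realization $\Phi(\alpha\otimes\beta)=\alpha\beta+\beta\alpha$ is valid (indeed, elements of $\Lambda^+$ and $\Lambda^-$ commute as skew-symmetric endomorphisms of $V$, so $\Phi(\alpha\otimes\beta)=2\alpha\beta$; symmetry and tracelessness follow exactly as you say), and the lexicographic ordering makes the $3\times 3$ block structure transparent. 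Your accounting of where $W^\pm$ and the scalar piece land, giving the diagonal entries $-(\lambda_i+\mu_j)+\tfrac{S}{12}$, matches \eqref{eq 6.01} once signs and normalizations are tracked, and you correctly note that self-adjointness reduces the sign pattern of \eqref{eq 6.0} to skew-symmetry of each $O_i$.

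The genuine soft spot is the skew-symmetry step. The phrase ``parity of the Kulkarni--Nomizu product under exchanging a $\Lambda^+$-factor with a $\Lambda^-$-factor'' does not, as written, describe an operation that obviously yields a sign: there is no natural isomorphism swapping $\Lambda^+$ and $\Lambda^-$, and $\overset{\circ}{\Ric}\owedge g$ is an element of $S^2_B(\Lambda^2 V)$, not of $\Lambda^+\otimes\Lambda^-$. What actually forces the claim is cleaner and worth spelling out. Decompose $S^2(\Lambda^+\otimes\Lambda^-)\cong\bigl(S^2\Lambda^+\otimes S^2\Lambda^-\bigr)\oplus\bigl(\Lambda^2\Lambda^+\otimes\Lambda^2\Lambda^-\bigr)$; as $\SO(4)$-modules these split further, and the only irreducible summand isomorphic to $S^2_0(V)\cong\Lambda^+\otimes\Lambda^-$ is $\Lambda^2\Lambda^+\otimes\Lambda^2\Lambda^-\cong\Lambda^+\otimes\Lambda^-$, the ``skew-skew'' piece. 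Hence, by Schur's lemma, the linear map $\overset{\circ}{\Ric}\mapsto\mathring{\,(\overset{\circ}{\Ric}\owedge g)\,}$ necessarily lands there (it is nonzero, as one checks on a single non-Einstein example, which also gives the ``vanish iff Einstein'' clause). A bilinear form in $\Lambda^2\Lambda^+\otimes\Lambda^2\Lambda^-$ is skew in both index pairs, which is exactly the statement that the diagonal $3\times 3$ blocks receive no contribution, each $O_i$ is skew-symmetric, and the $(k,i)$-block equals $-O_i$. Replacing your parity sentence with this Schur-lemma argument (or with the explicit component computation you allude to) would close the gap.
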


We shall use the above normal form of $\mathring{R}$ to prove Theorem \ref{thm Kahler 2D}. 

\begin{proof}[Proof of Theorem \ref{thm Kahler 2D}]
It is well-known (see for example \cite{Derdzinski83}) that on any K\"ahler surface $(M^4,g,J)$ with the natural orientation (in the sense that the K\"ahler form $\omega \in \Lambda^+$), the self-dual Weyl operator $W^+:\Lambda^2 \to \Lambda^2$ is given by 
\begin{equation*}
    W^+=\begin{pmatrix}
        -\frac{S}{12} & & \\
        & -\frac{S}{12} & \\
        & & \frac{S}{6}
    \end{pmatrix}.
\end{equation*}

Fix $p\in M$ and denote by $\l_1=\frac{S}{6}, \l_2=\l_3=-\frac{S}{12}$ the eigenvalues of $W^+$ at $p$ and $\mu_1 \geq \mu_2 \geq \mu_3$ the eigenvalues of $W^-$ at $p$. 
Let $\{\vp_i\}_{i=1}^9$ be the basis of $S^2_0(T_pM)$ constructed in \cite{CGT21} such that the matrix associated to $\mathring{R}$ with respect to this basis is given by \eqref{eq 6.0}.

Using $\l_1=\frac{S}{6}, \l_2=\l_3=-\frac{S}{12}$, we have
\begin{equation}\label{eq 6.3}
    D_1=
\begin{pmatrix}
    -\mu_1-\frac{1}{12}S & & \\
    & -\mu_2-\frac{1}{12}S & \\
    & & -\mu_3-\frac{1}{12}S
\end{pmatrix},
\end{equation}
and
\begin{equation}\label{eq 6.4}
    D_2=D_3=
\begin{pmatrix}
    -\mu_1+\frac{1}{6}S & & \\
    & -\mu_2+\frac{1}{6}S & \\
    & & -\mu_3+\frac{1}{6}S
\end{pmatrix}.
\end{equation}

Since $\mathring{R}$ is $4\frac{1}{2}$-nonnegative, we have that 
\begin{eqnarray*}
     0 & \leq & \sum_{i=1}^3 \mathring{R}(\vp_i,\vp_i) +\mathring{R}(\vp_4,\vp_4) +\frac{1}{2} \mathring{R}(\vp_7,\vp_7). 
\end{eqnarray*}
Using $\mu_1+\mu_2+\mu_3=0$ and the fact that $\mathring{R}$ is represented by the matrix in \eqref{eq 6.0}, the above inequality becomes
\begin{eqnarray*}
    0    &\leq & \left(-\mu_1 -\mu_2 -\mu_3 -\frac{S}{4}\right) +\frac{3}{2}\left(-\mu_1 +\frac{S}{6} \right) \\
    &=& -\frac 3 2 \mu_1,
\end{eqnarray*}
Therefore we have $\mu_1 \leq 0$. 
Thus we must have $\mu_1=\mu_2=\mu_3=0$, i.e., $W^-\equiv 0$.

Next, we show that $O_1=O_2=O_3=0$, i.e., $M$ is Einstein. 
To see $(O_3)_{12}=\mathring{R}(\vp_4,\vp_8)=0$, we consider the orthonormal subset $\{\psi_i\}_{i=1}^5$ of $S^2_0(T_pM)$ with $\psi_i=\vp_i$ for $i=1,2,3,5$, $\psi_4=\frac{1}{\sqrt{2}}(\vp_4 \pm \vp_8)$. Since $M$ has $4\frac{1}{2}$-nonnegative curvature operator of the second kind, we have 
\begin{eqnarray*}
    0 &\leq& \sum_{i=1,2,3,5}\mathring{R}(\psi_i,\psi_i) +\frac{1}{2} \mathring{R}(\psi_4,\psi_4) \\
    &=& \sum_{i=1,2,3,5}\mathring{R}(\vp_i,\vp_i) + \frac{1}{4}\left(\mathring{R}(\vp_4,\vp_4)+\mathring{R}(\vp_8,\vp_8)\pm 2\mathring{R}(\vp_4,\vp_8) \right) \\
    &=& \pm 2\mathring{R}(\vp_4,\vp_8),
\end{eqnarray*}
where we have used $\mathring{R}(\vp_i,\vp_i)=-\frac{S}{12}$ for $i=1,2,3$ and $\mathring{R}(\vp_i,\vp_i)=\frac{S}{6}$ for $i\neq 1,2,3$. 
Therefore, we have $(O_3)_{12}=\mathring{R}(\vp_4,\vp_8)=0$. Similar arguments yield $(O_3)_{13}=(O_3)_{23}=0$. Hence, $O_3=0$. 

To prove $(O_1)_{12}=\mathring{R}(\vp_1,\vp_5)=0$, let's consider the orthonormal subset $\{\psi_i\}_{i=1}^5$ of $S^2_0(T_pM)$ with $\psi_1=(\cos t) \vp_1 +(\sin t) \vp_5$, $\psi_i=\vp_i$ for $i=2,3,4$, $\psi_5=\vp_6$. Define a function
\begin{eqnarray*}
    f(t) &:=& \sum_{i=1,2,3,4}\mathring{R}(\psi_i,\psi_i) + \frac{1}{2} \mathring{R} (\psi_5,\psi_5) \\
    &=& \cos^2 t \mathring{R} (\vp_1,\vp_1) + \sin^2 t \mathring{R} (\vp_5,\vp_5) + 2 \sin t \cos t \mathring{R}(\vp_1,\vp_5) \\ 
    && + \sum_{i=2,3,4}\mathring{R}(\vp_i,\vp_i)+\frac{1}{2} \mathring{R} (\vp_6,\vp_6) \\
    &=& (1-\cos^2 t) \frac{S}{12} +\sin^2 t \frac{S}{6} + 2 \sin t \cos t  (O_1)_{12}.
\end{eqnarray*}
Since $f(t) \geq 0$ and $f(0)=0$, we conclude that $f'(0)=2(O_1)_{12}=0$.
Similarly, one can show that $(O_1)_{13}=(O_1)_{23}=0$, and hence $O_1=0$. Using the same idea, one also gets $O_2=0$. 

Thus, we have proved that the manifold is K\"ahler-Einstein and half-locally conformally flat ($W^-\equiv 0$). Note that we have $S\geq 0$. By a result of Hitchin \cite{Hitchin74}, $M$ is either flat or isometric to $\mathbb{CP}^2$ with the Fubini-Study metric, up to scaling.
\end{proof}

\begin{remark}
The above proof is poinwise and algebraic. By flipping the signs, one can show that a K\"ahler surface with $4\frac{1}{2}$-nonpositive curvature operator of the second kind has constant nonpositive holomorphic sectional curvature\footnote{This fact has also been proved by the author in \cite{Li22Kahler} using a different method.}.
\end{remark}

\begin{remark}
One can use the normal form of $\mathring{R}$ in Theorem \ref{thm CGT basis} to give an alternative proof the fact that a K\"ahler surface with six-nonnegative curvature operator of the second kind has nonnegative orthogonal bisectional curvature. The argument goes as follows. 
If $M$ has six-nonnegative curvature operator of the second kind, then the sum of any six diagonal elements of the matrix in \eqref{eq 6.0} of $\mathring{R}$ is nonnegative. This implies that the matrix 
\begin{equation*}
    A:=\begin{pmatrix}
    D_1 &  0 & 0 \\
    0 & D_2 & 0\\
    0 & 0 & D_3
\end{pmatrix}
\end{equation*}
is also six-nonnegative. Using the K\"ahlerity, we have that $D_1$ is given by \eqref{eq 6.3}, and $D_2$ and $D_3$ are given by \eqref{eq 6.4}. It's not hard to see that six-nonnegativity of $A$ implies $\mu_3 \geq -\frac{1}{12}S$, where $\mu_1 \geq \mu_2 \geq \mu_3$ are the eigenvalue of $W^{-}$. 
So $M$ has nonnegative isotropic curvature, which is equivalent to $\mu_2+\mu_3 \geq -\frac{1}{6}S$ for K\"ahler surfaces (see \cite{MW93} or \cite{Hamilton97}). 
The statement follows as nonnegative isotropic curvature implies nonnegative orthogonal bisectional curvature for K\"ahler manifold.

\end{remark}



\bibliographystyle{alpha}
\bibliography{ref}

\end{document}